\newcommand{\defstyle}[1]{\textbf{#1}}
\newcommand{\myprob}[1]{\mathbb P \left[ #1 \right]}
\newcommand{\omid}[1]{\mathbb E \left[ #1 \right]}
\newcommand{\omidCond}[2]{\mathbb E \left[ #1 \left| #2 \right. \right]}
\newcommand{\norm}[1]{\left| #1 \right|}
\newcommand{\del}[1]{}
\newcommand{\unwritten}[1]{}
\newcommand{\prokhorov}{d_P}
\newcommand{\restrict}[2]{{
		\left.\kern-\nulldelimiterspace 
		#1 
		\vphantom{\big|} 
		\right|_{#2} 
}}
\theoremstyle{theorem}
\newtheorem{theorem}{Theorem}[section]
\theoremstyle{definition}
\newtheorem{example}[theorem]{Example}
\theoremstyle{definition}
\newtheorem{remark}[theorem]{Remark}
\theoremstyle{theorem}
\numberwithin{equation}{section}
\let\orgdescriptionlabel\descriptionlabel
\renewcommand*{\descriptionlabel}[1]{%
	\let\orglabel\label
	\let\label\@gobble
	\phantomsection
	\edef\@currentlabel{#1}%
	\let\label\orglabel
	\orgdescriptionlabel{#1}%
}
\begin{document}
	
	\title{On the Existence of Balancing Allocations and Factor Point Processes}
	
	\author{Ali Khezeli \footnote{Institute for Research in Fundamental Sciences (IPM), alikhezeli@ipm.ir} , Samuel Mellick \footnote{Jagiellonian University, samuel.mellick@uj.edu.pl}}
	
	\maketitle
	
	\begin{abstract}
		In this article, we show that every stationary random measure on $\mathbb R^d$ that is essentially free (i.e., has no symmetries a.s.) admits a point process as a factor {(i.e., as a measurable and translation-equivariant function of the measure)}. As a result, we improve the results of Last and Thorisson (2022) on the existence of a factor balancing allocation between ergodic pairs of stationary random measures $\Phi$ and $\Psi$ with equal intensities. In particular, we prove that such an allocation exists if $\Phi$ is diffuse and either $(\Phi,\Psi)$ is essentially free or $\Phi$ assigns zero measure to every $(d-1)$-dimensional affine hyperplane. The main result is deduced from an existing result in descriptive set theory, that is, the existence of lacunary sections. We also weaken the assumption of being essentially free to the case where a discrete group of symmetries is allowed.
	\end{abstract}
	
	
	\section{Introduction}
	
	Can we find a perfect matching between two infinite discrete sets in $\mathbb R^d$? The question becomes nontrivial if we add the assumption that the matching is a measurable and translation-equivariant function of the two sets {(we use the term \textit{factor matching} in this case). The answer to this general question is negative, but the more interesting problem is the existence of factor matchings which are \textit{almost surely} perfect, as discussed below.} Given two jointly-stationary point processes $\Phi$ and $\Psi$ (i.e., random discrete sets whose joint distribution is invariant under all translations), for the existence of an almost surely perfect factor matching, it is necessary that $\Phi$ and $\Psi$ have equal \textit{sample intensities} (the latter is just the \textit{intensity} in the \textit{ergodic} case). Conversely, if the sample intensities are equal, stationarity implies that a generalization of the Gale-Shapley stable marriage algorithm results in a perfect matching almost surely~\cite{HoPePeSc09poissonmatching}. This is in fact proved after the continuous analogue of the problem in the novel works~\cite{HoHoPe06} and~\cite{HoPe05}. In these works, a factor \textit{fair tessellation} is constructed for any nonempty stationary point process; i.e., a tessellation of $\mathbb R^d$ into cells of equal volume (in fact, the abstract existence of a factor fair tessellation is implied by the shift-coupling theorem of Thorisson~\cite{Th96}). Since then, various works in the literature have studied generalizations of this problem, have provided various explicit constructions, and have studied properties of the matchings.
	
	In this work, we focus on the abstract existence results in generalizations of the matching problem. We consider \textit{balancing allocations} between measures $\varphi$ and $\psi$ on $\mathbb R^d$, which means maps $T:\mathbb R^d\to\mathbb R^d$ such that $T_*\varphi = \psi$; i.e., $\psi(A)=\varphi(T^{-1}(A))$ for any Borel set $A\subseteq\mathbb R^d$. This generalizes perfect matchings and fair tessellations. More generally, a \textit{balancing transport} is a Markovian kernel on $\mathbb R^d$ that transports $\varphi$ to $\psi$. For stationary random measures $\Phi$ and $\Psi$, one is interested in the existence of factor balancing allocations and transports (which are translation-equivariant and measurable functions of $(\Phi,\Psi)$). 
	See~\cite{ThLa09} and~\cite{HaKh16stabletransport} for the use of factor balancing transport kernels in constructing a \textit{shift-coupling} of a stationary random measure and its Palm version, the existence of which is proved abstractly in~\cite{Th96} (if the sample intensity is constant). See also~\cite{HoPe05} for the use of balancing allocations in constructing \textit{extra head schemes} for the Poisson point process.
	
	In general it is proved that factor balancing transport kernels exist if and only if $\Phi$ and $\Psi$ have equal {sample intensities}~\cite{ThLa09}. If $(\Phi,\Psi)$ is ergodic, this boils down to the equality of the intensities of $\Phi$ and $\Psi$. An explicit construction of an invariant transport is provided in~\cite{HaKh16stabletransport}, which is a generalization of~\cite{HoHoPe06} and is a generalization of the Gale-Shapley stable marriage algorithm. 
	
	The existence of invariant allocations is more complicated. It is convenient to assume that $\Phi$ is diffuse; i.e., has no atoms (otherwise combinatorial complexities appear). In~\cite{LaTo21}, it is proved that if $\Phi$ is diffuse and there exists an auxiliary nonempty point process $P$ as a factor of $\Phi$ and $\Psi$\footnote{In~\cite{LaTo21}, it is assumed that $P$ is a point process on the same probability space as that of $(\Phi,\Psi)$. This depends on the probability space chosen in the model. However, in the canonical probability space $\Omega:=\mathcal M\times \mathcal M$ defined in Subsection~\ref{subsec:definition}, the condition is equivalent to being a factor of $(\Phi,\Psi)$.} (e.g., when $\Psi$ has atoms), then an invariant balancing allocation exists (under the necessary conditions mentioned above). This had also been proved in~\cite{phdthesis} under the extra assumption that $\Phi$ assigns zero measure to every $(d-1)$-rectifiable set. This gives rise naturally to the question of the existence of factor point processes, which is asked in~\cite{phdthesis} (see also~\cite{LaTo21}). In this paper, we answer the problem affirmatively by proving the following theorem:

	\begin{theorem}
		\label{thm:factorPP}
		Let $\Phi$ and $\Psi$ be arbitrary random measures on $\mathbb R^d$. There exists a point process as a translation-invariant factor of $(\Phi,\Psi)$ (resp. of $\Phi$) that is nonempty a.s. if and only if $(\Phi,\Psi)$ (resp. $\Phi$) has no \emph{invariant direction} a.s.
	\end{theorem}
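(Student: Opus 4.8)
The plan is to recognise the statement as a probabilistic repackaging of the existence of \emph{lacunary Borel cross-sections} for the translation action. Let $\theta_x$ denote translation by $x\in\mathbb R^d$, acting on the Polish space $\Omega$ of (pairs of) measures on $\mathbb R^d$ with the vague topology, and let $\mu$ be the law of $(\Phi,\Psi)$ (resp.\ of $\Phi$); the two cases of the theorem are proved identically, so I will just speak of a single $\mathbb R^d$-space. A translation-invariant factor point process is, by definition, $N=f(\Phi,\Psi)$ for a Borel, $\theta$-equivariant map $f\colon\Omega\to\mathcal N$, where $\mathcal N$ is the space of locally finite counting measures on $\mathbb R^d$. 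Such an $f$ is the same datum as the Borel set $C:=\{\omega: f(\omega)(\{0\})\ge 1\}$ (``there is a point of the process at the origin''): for the appropriate sign convention of $\theta$, equivariance gives that $\{x:\theta_x\omega\in C\}$ is the set of atoms of $f(\omega)$, so $f(\omega)=\sum_{x:\theta_x\omega\in C}\delta_x$ up to atom multiplicities, and conversely any Borel $C\subseteq\Omega$ defines a Borel equivariant $f$ by this formula. Now $\{x:\theta_x\omega\in C\}$ is always a union of cosets of the stabiliser $\mathrm{Stab}(\omega):=\{x:\theta_x\omega=\omega\}$, a closed subgroup of $\mathbb R^d$, and this subgroup contains a line exactly when $\omega$ \emph{has an invariant direction}; otherwise it is a (possibly trivial) lattice. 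Hence the theorem is equivalent to: a Borel set $C$ meeting $\mu$-almost every $\theta$-orbit in a nonempty locally finite subset exists if and only if $\mathrm{Stab}(\omega)$ is discrete for $\mu$-a.e.\ $\omega$.

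With this reformulation, necessity is immediate: if $\mathrm{Stab}(\omega)$ contains a line for $\omega$ in a set of positive $\mu$-measure, then on that set the atom set $\{x:\theta_x\omega\in C\}$ of $f(\omega)$ is a union of cosets of a subgroup containing a line, hence either empty or not locally finite, so $f(\omega)$ cannot be a nonempty element of $\mathcal N$; therefore $N=f(\Phi,\Psi)$ is not both locally finite and a.s.\ nonempty. For sufficiency, assume $\mathrm{Stab}(\omega)$ is discrete $\mu$-a.e.\ and work on the Borel, $\theta$-invariant, $\mu$-conull set $\Omega':=\{\omega:\mathrm{Stab}(\omega)\text{ discrete}\}$. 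On the free part $\{\omega\in\Omega':\mathrm{Stab}(\omega)=\{0\}\}$ the action is a free Borel $\mathbb R^d$-action, and the existence of lacunary sections (the descriptive-set-theoretic input cited in the introduction) yields a Borel complete section that is uniformly $\varepsilon$-separated within each orbit; taking $C$ to be this section and $f$ the associated map already proves the theorem in the essentially free case.

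To treat the general case, where discrete (but not uniformly small) groups of symmetries are allowed, I would stratify $\Omega'$ by the \emph{systole} $s(\omega):=\inf\{|x|:x\in\mathrm{Stab}(\omega)\setminus\{0\}\}\in(0,\infty]$, a Borel $\theta$-invariant function. Put $\Omega_0:=\{s=\infty\}$ (the free part) and $\Omega_n:=\{2^{-n}<s\le 2^{-n+1}\}$ for $n\ge1$; these are Borel, $\theta$-invariant, and partition $\Omega'$. On $\Omega_n$ the action is free at all scales below $2^{-n}$, so the standard construction of a lacunary section — which is local, built from a countable family of Borel flow-boxes together with a maximal-separated-set argument — applies with separation $\varepsilon_n:=2^{-n}/3$ and produces a Borel complete section $C_n\subseteq\Omega_n$ meeting each orbit in a nonempty $\varepsilon_n$-separated set; since $\mathrm{Stab}(\omega)$ is a lattice, such a set is automatically locally finite (indeed relatively dense) in the orbit. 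Setting $C:=\bigcup_n C_n$ and $f(\omega):=\sum_{x:\theta_x\omega\in C}\delta_x$ then gives the required factor point process. (If the lacunary-section theorem is available in a form applying directly to Borel $\mathbb R^d$-actions with merely discrete stabilisers, one may apply it to $\Omega'$ and dispense with the stratification.)

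The main obstacle is precisely this descriptive-set-theoretic step: the translation action on $\Omega$ is genuinely non-free, so one must either have, or adapt, a lacunary-section theorem that does not presuppose freeness; the systole stratification is the device that reduces this to the regime of uniformly discrete stabilisers, where the classical argument goes through with only cosmetic changes. The remaining points are routine and would be checked in passing: that the translation action on $\Omega$ (and on the space of pairs of measures) is Borel with $\Omega$ standard Borel; that $\omega\mapsto\mathrm{Stab}(\omega)$ and $\omega\mapsto s(\omega)$ are Borel, via the Effros Borel structure on closed subsets of $\mathbb R^d$; that $\omega\mapsto\sum_{x:\theta_x\omega\in C}\delta_x$ is a Borel map into $\mathcal N$; and that a Borel equivariant map yields a genuine translation-invariant factor of $(\Phi,\Psi)$ (resp.\ of $\Phi$), where one may freely modify $f$ on the $\mu$-null complement of $\Omega'$.
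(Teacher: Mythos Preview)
Your proposal is correct and follows essentially the same route as the paper's Section~\ref{sec:lacunary}: reformulate a factor point process as the return set $\{t:\theta_t\omega\in C\}$ of a Borel cross-section $C$, and invoke the existence of complete lacunary sections. The only substantive difference is that the paper applies Kechris's theorem (Theorem~\ref{thm:Kechris} here) directly to the full action on $\mathcal M$ with no freeness hypothesis --- the theorem holds for any Borel action of a locally compact Polish group --- and then checks that discreteness of $H(\Phi)$ alone forces $\{t:\Phi-t\in S\}$ to be locally finite; hence your systole stratification, while correct, is not needed (as you suspect in your parenthetical remark).
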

	
	Here, an \textbf{invariant direction} of $(\Phi,\Psi)$ is a vector $t\in\mathbb R^d\setminus\{0\}$ such that $\Phi+\lambda t = \Phi$ and $\Psi+\lambda t = \Psi$ for all $\lambda\in\mathbb R$. Note that if $(\Phi,\Psi)$ is \defstyle{essentially free}; i.e., has no nontrivial translation-symmetry a.s., then there is no invariant direction. Note also that the set of all invariant directions, plus the origin, is a vector space, which is called \textbf{the subspace of invariant directions} of $(\Phi,\Psi)$. 
	
	Using the above theorem, we will prove the following result on the existence of factor allocations, {which improves the results of~\cite{LaTo21}.}
	
	\begin{theorem}
		\label{thm:allocation}
		Let $\Phi$ and $\Psi$ be stationary random measures on $\mathbb R^d$ with equal sample intensities. Assume that $\Phi$ is diffuse a.s. If at least one of the following conditions holds, then there exists an invariant balancing allocation between $\Phi$ and $\Psi$ that is a factor of $(\Phi,\Psi)$:
		\begin{enumerate}[(i)]
			\item \label{thm:allocation-free} $(\Phi,\Psi)$ is essentially free.
			\item \label{thm:allocation-invariant} $(\Phi,\Psi)$ has no invariant direction.
			\item \label{thm:allocation-d-1} $\Phi$ assigns zero measure to every $(d-1)$-dimensional affine subspace.
			\item \label{thm:allocation-translate} $\Phi$ assigns zero measure to every translate of the space of invariant directions of $(\Phi,\Psi)$.
		\end{enumerate}
	\end{theorem}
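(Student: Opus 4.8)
The plan is to reduce Theorem~\ref{thm:allocation} to the main existence result of Last and Thorisson~\cite{LaTo21}, which guarantees an invariant balancing allocation between a diffuse $\Phi$ and $\Psi$ provided there exists a nonempty point process that is a translation-invariant factor of $(\Phi,\Psi)$; the point of our Theorem~\ref{thm:factorPP} is precisely to produce such a point process under weak hypotheses. So the entire argument is a bookkeeping exercise: show that each of the four conditions (i)--(iv), possibly after a reduction, either directly yields a factor point process via Theorem~\ref{thm:factorPP}, or can be handled by passing to a quotient in which Theorem~\ref{thm:factorPP} applies.

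First I would dispatch (i) and (ii): being essentially free implies having no invariant direction, so (i) $\Rightarrow$ (ii), and under (ii) Theorem~\ref{thm:factorPP} directly hands us a nonempty factor point process of $(\Phi,\Psi)$, whence \cite{LaTo21} gives the allocation. Next I would show (iii) $\Rightarrow$ (iv): if $V$ denotes the subspace of invariant directions of $(\Phi,\Psi)$ and $\dim V \geq 1$, then every translate of $V$ is contained in an affine subspace of dimension $\geq d-1$ (extend $V$ to a hyperplane if $\dim V < d-1$, or note $V$ itself is such a translate-able subspace when $\dim V = d-1$; when $\dim V = d$ the measure $\Phi$ is translation-invariant, forced to be a multiple of Lebesgue measure, and $\Phi$ gives zero mass to the hyperplane $V$ only if... ) so condition (iii) implies $\Phi$ charges no translate of $V$, which is (iv). The one genuine case to think about is when $V = \mathbb{R}^d$, i.e.\ $\Phi$ and $\Psi$ are both deterministic-up-to-symmetry translation-invariant measures; then $\Phi$ is a multiple of Lebesgue, $\Psi$ likewise (or $\Psi$ is also forced by ergodicity-type considerations), and the allocation is the identity up to a scalar — a trivial sub-case worth stating separately.

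The real content is therefore condition (iv): $\Phi$ assigns zero measure to every translate of $V$. Here the plan is to quotient out the invariant directions. Write $W = V^\perp \cong \mathbb{R}^{d-k}$ where $k = \dim V$, and let $\pi: \mathbb{R}^d \to W$ be the orthogonal projection. Push $\Phi$ and $\Psi$ forward: because $V$ consists of invariant directions, $\bar\Phi := \pi_*\Phi$ and $\bar\Psi := \pi_*\Psi$ are well-defined stationary random measures on $W$ (the fibers are $V$-translates, and $\Phi$ restricted to a slab of width $1$ in the $V$-direction already captures everything up to the $V$-periodicity; more precisely one disintegrates along $\pi$ and uses invariance to see the fiber measures are translation-invariant along $V$, hence proportional to $k$-dimensional Lebesgue, with a density that is a function on $W$). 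Condition (iv) translates exactly into the statement that $\bar\Phi$ is diffuse on $W$. Moreover $(\bar\Phi,\bar\Psi)$ has no invariant direction on $W$ by construction (we have removed precisely the invariant directions). Now apply case (ii) of the theorem in dimension $d-k$: there is an invariant balancing allocation $\bar T$ between $\bar\Phi$ and $\bar\Psi$ on $W$. Finally lift $\bar T$ to $\mathbb{R}^d$ by declaring $T(v + w) = v + \bar T(w)$ for $v \in V$, $w \in W$ (i.e.\ act by $\bar T$ on the $W$-coordinate and fix the $V$-coordinate); one checks $T_*\Phi = \Psi$ using the disintegration and $T$ is a factor of $(\Phi,\Psi)$ since $\bar T$ is a factor of $(\bar\Phi,\bar\Psi)$ and the projection is equivariant.

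The main obstacle I expect is the measure-theoretic care needed in the quotient step: verifying that $\pi_*\Phi$ is genuinely a well-defined locally finite stationary random measure on $W$ (local finiteness can fail if the mass in a full slab $\pi^{-1}(B)$ is infinite, which is why one really works with the density obtained by disintegrating $\Phi$ against $k$-dimensional Lebesgue on the fibers, guaranteed to exist precisely because $\Phi$ is invariant along $V$), and checking that diffuseness is preserved and reflected correctly — an atom of $\bar\Phi$ at $w_0 \in W$ corresponds to positive $\Phi$-mass on the translate $w_0 + V$, and conversely, so (iv) is exactly equivalent to $\bar\Phi$ diffuse. The lifting of the allocation and the verification that it is a factor are then routine given the equivariance of $\pi$ under the relevant translation actions, and the equality of sample intensities is inherited because the quotient divides both intensities by the same ($k$-dimensional Lebesgue) normalisation.
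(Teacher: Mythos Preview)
Your approach is essentially the paper's: reduce (i) to (ii); handle (ii) by Theorem~\ref{thm:factorPP} plus \cite{LaTo21}; reduce (iii) to (iv) with the case $V=\mathbb R^d$ (both measures multiples of Lebesgue) treated separately; and for (iv) pass to $W=V^\perp$, apply (ii) there, and lift via $T(v+w)=v+\bar T(w)$. The paper also sidesteps your local-finiteness worry by defining the projected measure directly as $\Phi'(A):=\Phi(A\oplus C)$ for a fixed unit-volume Borel set $C\subseteq V$, rather than as $\pi_*\Phi$.

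There is one genuine gap: you treat $V$ as deterministic. Without ergodicity, $V=V(\Phi,\Psi)$ is a \emph{random} subspace, so $W$, its dimension $k$, and the projection all vary with the sample, and ``apply case (ii) on $W$'' is not yet a single measurable factor construction. The paper handles this explicitly: choose an orthonormal basis of $W$ measurably in $(\Phi,\Psi)$ (project the standard basis of $\mathbb R^d$ onto $W$ and run Gram--Schmidt), use it to define a linear isomorphism $L:W\to\mathbb R^k$, push $\Phi',\Psi'$ forward to $\mathbb R^k$, apply (ii) there to get $\tau''$, and pull back by $\tau'=L^{-1}\circ\tau''\circ L$. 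Your plan needs this step (or an ergodic decomposition argument with the same measurable-selection content) to be complete.
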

	
	Theorem~\ref{thm:factorPP} is deduced immediately from a theorem in descriptive set theory, that is, the existence of \textit{lacunary sections}~\cite{Ke92}. This will be discussed in Subsection~\ref{subsec:proofLacunary}. In order to be self-contained and to present the result to probabilists, a direct proof of Theorem~\ref{thm:factorPP} will also be given in Subsection~\ref{subsec:directproof}. This proof is a simplification of the existence of lacunary sections in the special case of random measures on $\mathbb R^d$. 
	
	\begin{remark}
		Theorem~\ref{thm:factorPP} can also be generalized to the more general setting of actions of groups. {The condition of having no invariant direction should be replaced by the condition that the stabilizer of almost every point is a discrete subgroup.} This is closely related to the result of~\cite{AbMe22} showing that every essentially free probability-measure-preserving action of a non-discrete locally compact second-countable group $G$ is isomorphic to a point process of finite intensity on $G$.
	\end{remark}

	\begin{remark}
		A few days before publishing this preprint, an independent preprint~\cite{HuMu23} is published which states the existence of factor balancing allocations under stronger conditions (but does not study the existence of factor point processes). This work assumes that $\Phi$ assigns zero measure to every $(d-1)$-rectifiable set, which is stronger than the assumptions of Theorem~\ref{thm:allocation}.\footnote{It is stated in~\cite{HuMu23} that the condition is sharp, but Theorem~\ref{thm:allocation} shows that the existence can hold under weaker conditions as well. The more precise statement is that the condition cannot be simply removed from the statement of the theorem.} The method of the proof is by using optimal transport and an extension of Monge's theorem to stationary random measures provided in~\cite{Hu16}.
	\end{remark}
	
	\section{The Existence of Factor Point Processes}
	
	{In this section, we prove Theorem~\ref{thm:factorPP}. Two proofs are given, one by using lacunary sections in Subsection~\ref{subsec:proofLacunary} and a direct proof in Subsection~\ref{subsec:directproof}.
		
	}
	
	\subsection{Definitions}
	\label{subsec:definition}
	
	Let $\mathcal M$ be the set of locally-finite Borel measures on $\mathbb R^d$. This space is a Polish space under a modification of the Prokhorov metric, which is denoted by $\prokhorov$ here. A measure $\varphi\in\mathcal M$ is \defstyle{diffuse} if it has no atoms; i.e., $\varphi(\{x\})=0, \forall x\in\mathbb R^d$. For $t\in\mathbb R^d$ and $\varphi\in M$, define $\varphi+t$ by $(\varphi+t)(A):=\varphi(A-t)$ for $A\subseteq \mathbb R^d$. A \defstyle{stationary random measure} is a random element $\Phi$ of $\mathcal M$ such that its distribution is invariant under translations; i.e., $\Phi+t$ has the same distribution as $\Phi$ for all $t\in\mathbb R^d$. Similarly, a pair of random measures $(\Phi,\Psi)$ is (jointly-) stationary if the joint distribution (on $\mathcal M\times \mathcal M$) is invariant under translations. A \defstyle{stationary (simple) point process} is a stationary random measure that is a counting measure almost surely. In other words, it is a stationary random discrete subset of $\mathbb R^d$.

	Given a measure $\varphi\in\mathcal M$, the \textbf{group of translation-symmetries} of $\varphi$ is $H:=H(\varphi):=\{t\in\mathbb R^d: \varphi-t=\varphi\}$. It can be seen that $H$ is a closed subgroup of $\mathbb R^d$. Therefore, it can be decomposed into the sum of a linear subspace $V:=V(\varphi)\subseteq\mathbb R^d$ and a lattice in the orthogonal complement of $V$ (by a lattice we mean the discrete subgroup generated by a basis of the subspace). The subspace $V$ is indeed the \defstyle{subspace of invariant directions} of $\varphi$ defined in the introduction. Similar definitions can be provided for a pair $(\varphi,\psi)$ of measures. A stationary random measure is called \defstyle{essentially free} if $H(\Phi)=\{0\}$ a.s. It has no invariant direction a.s. if $V(\Phi)=\{0\}$ a.s.
	
	A point process $P$ is called a (equivariant) \defstyle{factor} of $\Phi$ if $P$ is equal to a measurable translation-equivariant function of $\Phi$; i.e., $P(\Phi+t) = P(\Phi)+t$ for every $t\in\mathbb R^d$ and for almost every sample of $\Phi$.\footnote{{For this, it is enough that $\forall t: \myprob{P(\Phi+t)=P(\Phi)+t}=1$. See Proposition~B5 of~\cite{bookZi84}.}}

	\subsection{Proof of Theorem~\ref{thm:factorPP} Using Lacunary Sections}
	\label{subsec:proofLacunary}
	
	The following definition and result are borrowed from~\cite{Ke19}. Let $X$ be a {metric} space equipped with a Borel action of a topological group $G$. {In other words, the map $(g,x)\mapsto gx$ is a Borel measurable map from $G\times X$ to $X$.}
	A Borel subset $S\subseteq X$ is called a \defstyle{complete section} if it intersects every orbit of the action. It is called a \defstyle{lacunary section} if there is a neighborhood $U$ of the identity of $G$ such that for every $s\in S$, one has $(U\cdot s) \cap S = \{s\}$. {In other words, in every orbit, one has selected a nonempty set of \textit{uniformly separated} points in a measurable way.
		\begin{example}
			Let $X:=\mathcal N\subseteq\mathcal M$ be the set of discrete subsets of $\mathbb R^d$ and consider the action of $G:=\mathbb R^d$ on $\mathcal N$ by translations. The subset {$\mathcal N_0\cup\{\emptyset\}$, where} $\mathcal N_0=\{\varphi\in\mathcal N: 0\in\varphi\}$, is a complete section, but it is not a lacunary section. However, it would be lacunary in the weaker sense if $U$ would be allowed to depend on $s$. Also, if $U$ is an arbitrary neighborhood of the origin, then $(U +\varphi)\cap \mathcal N_0$ is a finite set. 
			Even in this simple example, it does not seem immediate to find a lacunary section.
		\end{example}
	}
	
	The following is a special case of Theorem~3.10 of~\cite{Ke19}, which is given originally in~\cite{Ke92}:
	
	\begin{theorem}[\cite{Ke92}]
		\label{thm:Kechris}
		If $G$ is a locally compact Polish group acting in a Borel way on a Polish space, then the action admits a complete lacunary section.
	\end{theorem}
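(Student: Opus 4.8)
The plan is to build the section by a transfinite/greedy "packing" argument combined with a countable exhaustion coming from a Borel parametrization of the orbit equivalence relation. First I would fix a left-invariant metric $d$ on $G$ compatible with its topology (available since $G$ is Polish), and choose a relatively compact symmetric open neighborhood $W$ of the identity with $W^2 \subseteq U$ for a prescribed open $U\ni e$; the aim is that the final section $S$ will satisfy $(W\cdot s)\cap S=\{s\}$ for all $s\in S$, which then gives lacunarity with the neighborhood $W$ (after shrinking). The orbit equivalence relation $E_G$ on $X$ is Borel (indeed analytic, and for locally compact $G$ acting continuously on a Polish space it is Borel with countable-to-$G$ sections in a suitable sense), and by the Becker–Kechris / Lusin–Novikov machinery one can cover $X$ by countably many Borel sets $X = \bigcup_n X_n$ together with Borel maps that let us enumerate each orbit in a measurable way.

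The key steps, in order: (1) Use local compactness of $G$ to reduce to a "uniformly lacunary" target: it suffices to find a Borel complete section $S$ such that distinct points of $S$ in the same orbit are at $G$-distance $\ge$ some fixed $\epsilon$, i.e. $g\cdot s = s'$ with $s\ne s'$ in $S$ forces $d(e,g)\ge\epsilon$. (2) Fix a countable dense sequence $(g_i)$ in $G$ and a countable Borel generating family of "local cross-sections": by a standard selection theorem (e.g. Kechris, \emph{Classical Descriptive Set Theory}, via Lusin–Novikov uniformization applied to the Borel relation $\{(x,g): g\cdot x \in \text{[some fixed Borel transversal of the }W\text{-local action]}\}$), obtain countably many Borel partial maps $\{c_j : D_j \to X\}$ whose images cover $X$ and each of which is "$W$-separated" in the orbit. (3) Do the greedy union: set $S_0 = \emptyset$ and, having defined the Borel set $S_k$, let $S_{k+1} = S_k \cup \{\, c_{k}(x) : x\in D_k,\ (W\cdot c_k(x)) \cap S_k = \emptyset\,\}$. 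Each $S_{k+1}$ is Borel because the condition $(W\cdot y)\cap S_k=\emptyset$ is a Borel condition in $y$ (using that $W$ is relatively compact and $S_k$ Borel — here local compactness is essential, since $W\cdot y$ is then a relatively compact set and the projection defining the condition is "proper enough" to stay Borel). Put $S = \bigcup_k S_k$. (4) Check the two properties: $S$ is lacunary by construction (any two points of $S$ added at stages $k\le k'$ are $W$-separated, and points added at the same stage are $W$-separated because $c_k$ was chosen $W$-separated in each orbit); and $S$ is a complete section because if some orbit $\mathcal O$ met $S$ in $\emptyset$, then since the $c_j$'s cover $X$ there is a first $j$ and an $x$ with $c_j(x)\in\mathcal O$, and at stage $j$ this point (or another point of $\mathcal O\cap \operatorname{im} c_j$) would have been added unless blocked by an earlier point of $S\cap\mathcal O$, contradiction.

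The main obstacle is step (3)–(4): ensuring the greedy union remains \emph{Borel} and genuinely \emph{complete}. The Borelness is delicate because "no point of the growing set $S_k$ lies within $W$ of $y$" involves a projection over $G$, and it is precisely local compactness of $G$ (so that $W$ can be taken relatively compact, making this projection behave) that rescues measurability — this is the one place the hypothesis is used in an essential way. The completeness is subtle because greedily adding points can, a priori, "starve" an orbit if every candidate point is perpetually blocked; the fix is to interleave the enumeration so that each orbit gets a genuine chance (a back-and-forth bookkeeping, or equivalently a transfinite recursion of length $\omega$ exhausting a fixed countable generating family of local sections), and then a compactness/covering argument shows that within any orbit only finitely many "blocking" points can accumulate near a given candidate, so some candidate eventually survives. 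I would point the reader to~\cite{Ke92} (and the exposition in~\cite{Ke19}, Theorem~3.10) for the full details of this bookkeeping, since the remaining verifications are routine descriptive-set-theoretic estimates rather than new ideas.
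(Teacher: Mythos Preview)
The paper does not prove this theorem. Theorem~\ref{thm:Kechris} is stated as a citation of Kechris~\cite{Ke92} (via Theorem~3.10 of~\cite{Ke19}) and is used as a black box to deduce Theorem~\ref{thm:factorPP}; no argument for it appears in the paper. So there is no proof in the paper to compare your proposal against, and your sketch---which is broadly the standard descriptive-set-theoretic route and which you yourself defer to~\cite{Ke92} and~\cite{Ke19} for the details---goes beyond what the paper attempts.

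That said, Section~\ref{sec:directproof} does give a direct, self-contained construction in the special case $G=\mathbb R^d$ acting on $\mathcal M$ by translations, and that argument is in effect a hand-built complete lacunary section for this particular action. The method there is quite different from your greedy/Lusin--Novikov scheme: it works with the Prokhorov metric on $\mathcal M$ and a concrete shell $K=\{t:\frac{1}{N}\le|t|\le\frac{2}{N}\}$. One first finds the open set $A_\epsilon=\{\varphi:\prokhorov(\varphi,\theta_K\varphi)>\epsilon\}$, picks an open ball $B\subseteq A_\epsilon$ of diameter less than $\epsilon$ with $\myprob{\Phi\in B}>0$, and sets $U(\Phi)=\{t:\Phi-t\in B\}$. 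The point is that any two points of $U$ are either within $1/N$ or farther than $2/N$, so $U$ breaks into bounded equivalence classes, and selecting the lexicographic minimum of each class yields the point process. Completeness is then obtained not by a greedy interleaving but by a countable exhaustion over balls $B_i$ covering $A_\epsilon$ and over the values of $\epsilon$. Your approach is the general-group machinery and buys applicability to arbitrary locally compact Polish $G$; the paper's direct argument is more elementary and explicit but leans on the concrete metric of $\mathcal M$ and the geometry of $\mathbb R^d$ (in particular the lexicographic-selection step).
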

	
	The existence of complete lacunary sections in the context of free quasi measure preserving actions goes back to Forrest~\cite{Fo74}. 
	
	We will deduce Theorem~\ref{thm:factorPP} quickly from the above theorem. {A direct proof will also be given in Subsection~\ref{subsec:directproof}, which is a simplification of the proof of~\cite{Fo74}.}

	\begin{proof}[Proof of Theorem~\ref{thm:factorPP}]
		%
		Let $H$ be the group of translation-symmetries of $(\Phi,\Psi)$ and $V$ be the subspace of invariant directions. Observe that if $V\neq\{0\}$ with positive probability, then there exists no nonempty factor point process (otherwise, the point process should also have invariant directions, which is impossible). This proves the \textit{if} side of the claim. 
		
		For the other side, we will use Theorem~\ref{thm:Kechris}.	
		Consider the action of $G:=\mathbb R^d$ on $\mathcal M\times\mathcal M$ by translations. By Theorem~\ref{thm:Kechris} above, there exists a complete lacunary section $S\subseteq\mathcal M\times \mathcal M$ for this action. 
		Define 
		\[P:=P(\Phi,\Psi):=\{t\in\mathbb R^d: (\Phi-t,\Psi-t)\in S\}.\] 
		Since $S$ is a complete section, $P$ is nonempty. In general, $P$ need not be a point process since it contains translated copies of $H$. However, if $H$ is a discrete subgroup of $\mathbb R^d$, it is straightforward to show that $P$ is also a discrete set (since $S$ is lacunary). {Indeed, if $U\subseteq\mathbb R^d$ is the neighborhood of 0 in the definition of the lacunary section $S$, then for every $t\in P$, one observes that $(U+t)\cap P = (U\cap H)+t$, which implies that $P$ is discrete. By the definition of $V$ is Subsection~\ref{subsec:definition}, $H$ is discrete if and only if $V$ is trivial;} i.e., $(\Phi,\Psi)$ has no invariant direction. In this case, $P$ is a translation-invariant factor point process of $\Phi$. So the claim is proved.
	\end{proof}

	\subsection{Direct Proof of Theorem~\ref{thm:factorPP}}
	\label{subsec:directproof}
	
	In this section, we provide a direct proof of Theorem~\ref{thm:factorPP} in order to be self-contained and to show the idea more clearly. In the case of essential freeness, this proof is a simplification of that of~\cite{Fo74}. The latter is more general and considers the continuous actions of locally-compact Polish groups. The proof is much simpler in the setting of the present paper. Also, the arguments are modified to cover the case where $(\Phi,\Psi)$ is not essentially free.

	\begin{proof}[Direct Proof of Theorem~\ref{thm:factorPP}]
		The necessity of the condition is trivial and is shown in Subsection~\ref{subsec:proofLacunary}. So we prove sufficiency here.
		Also, for simplicity of notation, we prove Theorem~\ref{thm:factorPP} for factors of $\Phi$ only, {assuming that $\Phi$ has no invariant direction}. The proof for factors of $(\Phi,\Psi)$ is identical.

		Let us start by an outline of the strategy of the proof. We first produce a factor of $\Phi$ that is an invariant random open set of $\mathbb R^d$ such that its connected components are bounded and it is non-empty on an event $E$ with positive probability. It is then simple to refine this factor open set to a point process. Then, a sort of \textit{measurable Zorn's lemma} argument allows one to enlarge $E$ to a maximal  event (up to null sets) and show that the maximal event has probability one. This shows that $\Phi$ must admit point process factors which are almost surely nonempty.

		Let $H$ be the group of translation-symmetries of $\Phi$ and $V$ be the subspace of invariant directions. Assume $V=\{0\}$ a.s. This implies that $H$ is a discrete lattice in $\mathbb R^d$. Hence, there exists a smallest natural number $N=N(\Phi)$ such that $\bar B_{2/N}(0)\cap H=\{0\}$, where $\bar B_r(0)$ is the closed ball of radius $r$ centered at 0. 
		Consider the shell $K:=K(\Phi):=\{t\in\mathbb R^d: \frac 1 N\leq \norm{t} \leq \frac 2 N\}$. Given a measure $\varphi\in\mathcal M$, define $\theta_K \varphi:=\{\varphi + t: t\in K(\varphi) \}\subseteq \mathcal M$. The definition of $N$ gives that $\Phi\not\in \theta_K\Phi$ a.s. Since $\theta_K\Phi$ is a closed subset of $\mathcal M$, one obtains $\prokhorov(\Phi,\theta_K\Phi)>0$ a.s. Hence, by choosing $\epsilon>0$ sufficiently small, one can assume $\myprob{\Phi\in A_{\epsilon}}>0$, where
		\[
		A_{\epsilon}:=\{\varphi\in\mathcal M: \prokhorov(\varphi, \theta_K\varphi)>\epsilon \}.
		\]
		Observe that $A_{\epsilon}$ is an open subset of $\mathcal M$. We may therefore fix some open subset $B\subseteq A_{\epsilon}$ of diameter less than $\epsilon$ such that $\myprob{\Phi\in B}>0$ (simply express $A_{\epsilon}$ as a countable union of balls of radius less than $\epsilon$, and then pick one that has a positive probability). Define
		\[
		U:=U(\Phi):=\{t\in \mathbb R^d: \Phi-t\in B \}.
		\]
		Observe that $U$ is an open subset of $\mathbb R^d$ which is an equivariant factor of $\Phi$; i.e., $U(\varphi+s) = U(\varphi) + s, \forall \varphi\in\mathcal M, \forall s\in\mathbb R^d$. It is also nonempty with positive probability since $0\in U$ if $\Phi\in B$.
		
		The key property of $U$ is the following: If $t,s\in U$, then $\Phi-t$ and $\Phi-s$ belong to $B$, and hence, $\prokhorov(\Phi-t,\Phi-s)<\epsilon$ (since $B$ has diameter less than $\epsilon$). Therefore, $t-s\not\in K$ by the definition of $A_{\epsilon}$. That is, either $\norm{t-s}< \frac 1 N$ or $\norm{t-s} > \frac 2 N$. As a result, the following defines an equivalence relation on $U$: $t\sim s$ iff $\norm{t-s}<\frac 1 N$. Also, every equivalence class has diameter at most $\frac 1 N$, and hence, is bounded.
		
		Now, one may produce a factor point process $P$ in many ways\footnote{This step is specialized to $\mathbb R^d$ and its proof is more involved for the actions of other groups.}. For instance, for each of the (countably many) equivalence classes $C$ of $U$, choose the least element of the closure $\overline{C}$ according to the lexicographic order. Note that this process is an invariant factor of $\Phi$ and is uniformly separated.
		
		Observe that $P$ is nonempty if and only if $\Phi\in \theta_{\mathbb R^d}B$. Note that $\theta_{\mathbb R^d} B$ is invariant, open and has positive probability. If $\Phi$ were ergodic, then this would be an almost sure event and we would be done. In the general case, let $p\leq 1$ denote the supremum of $\myprob{\Phi\in E}$, where $E\subseteq \mathcal M$ ranges over all open and $\mathbb R^d$-invariant subsets such that there exists a factor point process which is nonempty on $E$. It can be seen that the supremum is attained (if $\myprob{\Phi\in E_n}\to p$, let $P_n$ be a factor point process which is nonempty on $E_n$ and define $P:=P_n$ if $\Phi\in E_n\setminus (E_1\cup \ldots \cup E_{n-1}$)). Also, if $E$ is any such event such that $\myprob{\Phi\in E}<1$, then one can enlarge $E$ a little bit (it is enough to repeat the proof by replacing $\mathcal M$ with $\mathcal M\setminus E$). These two facts imply that the supremum is attained and $p=1$, which implies the claim of the theorem.
		
		An alternative constructive proof of the last step is as follows. First, choose $\epsilon$ as an invariant function of $\Phi$ from the beginning; e.g., the largest number of the form $\epsilon=1/M$ such that $\Phi\in \theta_{\mathbb R^d} A_{\epsilon}$. Then, cover $A_{\epsilon}$ by open sets $B_1,B_2,\ldots\subseteq A_{\epsilon}$ such that each $B_i$ has diameter less than $\epsilon$. Finally, choose the smallest $i$ such that $\Phi\in \theta_{\mathbb R^d} B_i$ and construct $U$ and $P$ using $B_i$ similarly to the above construction. This way, $P$ is nonempty a.s. and the theorem is proved.
	\end{proof}
	
	\section{The Existence of Balancing Allocations}
	\label{sec:allocation}
	
	In this section, we prove Theorem~\ref{thm:allocation} using Theorem~\ref{thm:factorPP}. We start by formalizing the definitions given in the introduction.
	
	Let $\Phi$ and $\Psi$ be stationary random measures on $\mathbb R^d$. 
	Let $I$ be the $\sigma$-field of invariant events in $\mathcal M$; i.e., those events that are invariant under all translations. $\Phi$ is called \defstyle{ergodic} if every event in $I$ has probability zero or one. Ergodicity of $(\Phi,\Psi)$ is defined similarly. 
	The \defstyle{intensity} of $\Phi$ is defined by $\omid{\Phi(C)}$, where $C\subseteq \mathbb R^d$ is an arbitrary Borel set with unit volume. This quantifies the \textit{mean measure per unit volume} of $\Phi$. The \defstyle{sample intensity} of $\Phi$ is the random variable $\omidCond{\Phi(C)}{I}$. Note that if $\Phi$ is ergodic, then its sample intensity is equal to its intensity a.s. {In general, the sample intensity is equal a.s. to the asymptotic density of points per unit volume; i.e., the limit of the number of points of $\Phi\cap [-1,1]^n$ divided by $2^n$ as $n\to \infty$.}
	
	An \defstyle{(equivariant) factor balancing allocation} between $\Phi$ and $\Psi$ is a map $T:\mathbb R^d\to\mathbb R^d$ that depends on $(\Phi,\Psi)$ in a translation-equivariant and measurable way and satisfies $T_*\Phi=\Psi$ a.s. {More precisely, being a factor means $T_{(\Phi+t,\Psi+t)}(x+t)=T_{(\Phi,\Psi)}(x)+t, \forall t\in\mathbb R^d$ and the function $(x,\Phi,\Psi)\mapsto T(x)$ (defined on $\mathbb R^d\times \mathcal M\times\mathcal M$) is measurable. }

	\begin{proof}[Proof of Theorem~\ref{thm:allocation}]
		\eqref{thm:allocation-free}. If $(\Phi,\Psi)$ is essentially free, then it has no invariant direction a.s. So the claim is implied by part~\eqref{thm:allocation-invariant} proved below.
		
		\eqref{thm:allocation-invariant}. If there is no invariant direction a.s., then Theorem~\ref{thm:factorPP} implies that there exists a point process that is nonempty a.s. and is a translation-equivariant factor of $(\Phi,\Psi)$. Using this as an auxiliary point process, Theorem~1.1 of~\cite{LaTo21} constructs a balancing factor allocation.
		
		\eqref{thm:allocation-d-1}. The claim is implied by part~\eqref{thm:allocation-translate}, which will be proved below. The only remaining case is when the subspace of invariant directions is the whole $\mathbb R^d$. In this case, both $\Phi$ and $\Psi$ are multiples of the Lebesgue measure and the claim is trivial.
		
		\eqref{thm:allocation-translate}. Let $V$ be the subspace of invariant directions. First, assume that $(\Phi,\Psi)$ is ergodic\footnote{In the ergodic case, \eqref{thm:allocation-translate} is deduced from part~\eqref{thm:allocation-invariant} in~\cite{phdthesis}. For being self-contained, we include the proof here.}. In this case, $V$ is almost surely equal to a deterministic subspace (since it is a translation-invariant function of $(\Phi,\Psi)$). Let $W$ be the orthogonal complement of $V$. The measures $\Phi$ and $\Psi$ induce two measures on $W$. More precisely, given an arbitrary Borel set $C$ in $V$ with unit volume, for $A\subseteq W$, define $\Phi'(A):=\Phi(A+ C)$ and $\Psi'(A):=\Psi(A+ C)$, where $A+ C:=\{x+y: x\in A, y\in C\}$ is the Minkowski sum of $A$ and $C$. Now, $\Phi'$ and $\Psi'$ are ergodic stationary random measures on $W$ and their intensities are equal to those of $\Phi$ and $\Psi$ (and hence, are equal). Also, $(\Phi',\Psi')$ has no invariant direction. In addition, the assumption of~\eqref{thm:allocation-translate} implies that $\Phi'$ is diffuse. Therefore, part~\eqref{thm:allocation-invariant} implies that there exists a balancing allocation between $\Phi'$ and $\Psi'$ which is a factor of $(\Phi', \Psi')$. If $\tau'$ denotes this allocation (note that $\tau':W\to W$), define the allocation $\tau$ on $\mathbb R^d$ by $\tau(v+w):=v+\tau'(w), \forall v\in V,\forall w\in W$. Then, $\tau$ is a factor allocation which balances between $\Phi$ and $\Psi$ as desired. 
		
		In the general case where $(\Phi,\Psi)$ might be non-ergodic, the spaces $V$ and $W$ might be random and some care is needed to choose the allocation as a measurable factor of $(\Phi,\Psi)$ (a naive ergodic decomposition is not sufficient). In this case, construct $V, W, \Phi'$ and $\Psi'$ as above. Then, let $k:=\mathrm{dim}(W)$ and choose an orthonormal basis $(e_1,\ldots,e_k)$ for $W$ as a measurable function of $W$ (e.g., consider the orthogonal projection of the standard unit vectors of $\mathbb R^d$ on $W$ and use the Gram–Schmidt algorithm). This defines a linear map $L:W\to\mathbb R^k$. Let $\Phi'':=L_*\Phi'$ and $\Psi'':=L_*\Psi'$. By part~\eqref{thm:allocation-invariant}, construct a factor balancing allocation $\tau''$ between $\Phi''$ and $\Psi''$. Then, define the allocation $\tau'$ between $\Phi'$ and $\Psi'$ by $\tau':=L^{-1}\circ \tau''\circ L$, and finally, construct $\tau$ similarly to the previous paragraph. In this construction, every constructed item is a Borel measurable function of the previously constructed items. This implies that $\tau$ is a measurable factor of $(\Phi,\Psi)$ and the claim is proved.
	\end{proof}
	
		\begin{remark}[Measurability]
			As mentioned in the introduction, for the existence of a factor balancing allocation (or transport kernel), it is necessary that $\Phi$ and $\Psi$ have equal sample intensities. This would no longer be necessary if the measurability assumption was removed (one can use the axiom of choice to choose one point in each orbit of $\mathcal M\times\mathcal M$ and use it to construct an equivariant balancing allocation if $\Phi$ is diffuse). Measurability enables one to use the \textit{mass transport principle} and prove the necessity of the equality of the intensities.
		\end{remark}

		\begin{remark}[Remaining Cases]
			The only remaining cases in studying the existence of factor balancing allocations are:
			\begin{enumerate}[(i)]
				\item When $\Phi$ has atoms,
				\item When after some random rotation, $(\Phi,\Psi)$ is of the form $(\Phi'\otimes\mathrm{Leb}, \Psi'\otimes\mathrm{Leb})$, where $\Phi'$ and $\Psi'$ are measures on $\mathbb R^k$, $\mathrm{Leb}$ is the Lebesgue measure on $\mathbb R^{d-k}$ and $\Phi'$ has atoms.
			\end{enumerate}
			By the method of the proof of Theorem~\ref{thm:allocation}, the second case can be reduced to the first case. So it remains to study the existence of factor allocations when $\Phi$ is not diffuse. 
			In this case, combinatorial obstacles appear since the mass of an atom cannot be splitted by an allocation.
		\end{remark}

	\section*{Acknowledgments}
	This work has been completed while the first author was affiliated with INRIA Paris. This work was supported by the ERC NEMO grant, under the European Union's Horizon 2020 research and innovation programme, grant agreement number 788851 to INRIA. We also thank Mir-Omid Haji-Mirsadeghi for suggesting the alternate proof of the last step of the proof of Theorem~\ref{thm:factorPP}.
	
	\bibliography{bib} 
	\bibliographystyle{plain}
	
\end{document}